\newtheorem*{thm}{Theorem}
\newtheorem{lemma}{Lemma}
\begin{document}

\title[]{Nonlinear recursions on the reals\\and a problem of Graham}

\author[]{Stefan Steinerberger}

\address{Department of Mathematics, University of Washington, Seattle, WA 98195, USA}
 \email{steinerb@uw.edu}

\keywords{Chaotic Dynamics, Graham's problem, Glasser's theorem}
\subjclass[2010]{37E05} 
\thanks{The author is partially supported by the NSF (DMS-2123224).}

\begin{abstract} We study sequences $(x_n)_{n=1}^{\infty}$ of reals given by $x_{n+1} = f(x)$ where
$$f(x) = x - \sum_{i=1}^{m} \frac{\alpha_i}{x - \beta_i},$$
where $\alpha_1, \dots, \alpha_m \in \mathbb{R}_{>0}$ and $\beta_1, \dots, \beta_m \in \mathbb{R}$ are arbitrary.
A special case is $x_{n+1} = x_n - 1/x_n$ due to Ronald Graham for which Chamberland \& Martelli showed that the dynamics is  chaotic (topologically conjugate to the doubling map). We prove that the general nonlinear recursion, despite being potentially chaotic, is effective at ensuring that most iterates end up close to one of the poles $\beta_i$ relatively quickly. More precisely, for a positive proportion of initial values $x \in \mathbb{R}$, the sequence gets very close (distance $\lesssim |x|^{-1}$) to one of the poles $\beta_i$ within a relatively small ($\lesssim x^2$) number of iteration steps.  
\end{abstract}
\maketitle

\vspace{-10pt}

\section{Introduction}
\subsection{Graham's problem} We were motivated by the following problem described by Ronald Graham in a talk given at UCLA in August 2000 \cite{chamberland}. Consider the sequence $(x_n)_{n=0}^{\infty}$ of real numbers defined by 
$$ x_{n+1} = x_n - \frac{1}{x_n}, \quad x_0 = 2.$$
\begin{quote}
\textbf{Question.} Is the sequence unbounded?
\end{quote}
It is clear that if $x_{n} \gg 1$, then the sequence is going to be monotonically decreasing while for $x_n \ll -1$ it is monotonically increasing: when far away from the origin, the sequence tends towards the origin. The dynamics close to the origin is complicated. It is also quickly seen, see Fig. 1, that
iterating the function leads to chaos.
\begin{center}
\begin{figure}[h!]
\begin{tikzpicture}
\node at (0,0) {\includegraphics[width = 0.44\textwidth]{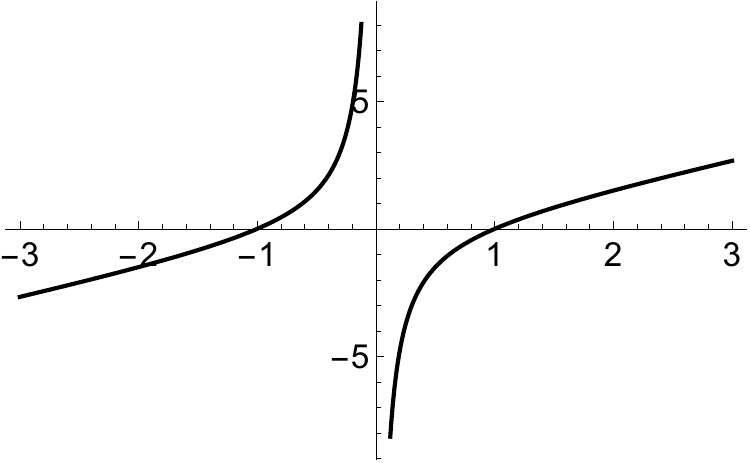}};
\node at (6.5,0) {\includegraphics[width = 0.44\textwidth]{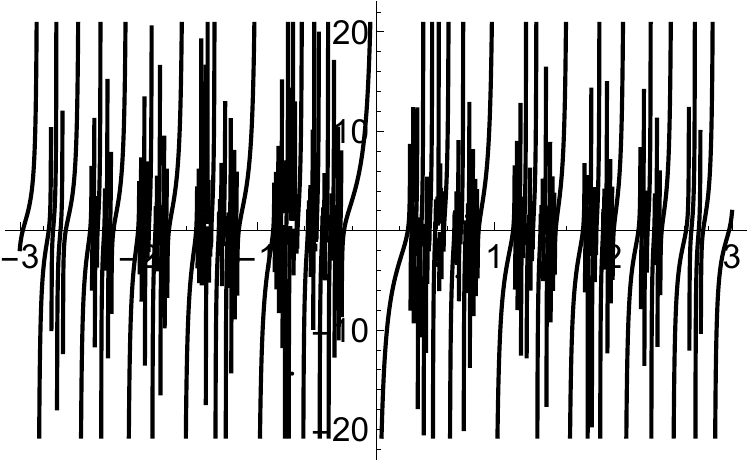}};
\end{tikzpicture}
\caption{$f(x) = x-1/x$ and $f^{(8)}(x)$, both on $[-3,3]$.}
\end{figure}
\end{center}
This chaotic behavior was nicely explained by Chamberland \& Martelli \cite{chamberland}.
\begin{thm}[Chamberland \& Martelli \cite{chamberland}] The map $f(x) = x - 1/x$ is topologically conjugate to the doubling map $2x \mod 1$ on the interval $[0,1)$. \end{thm}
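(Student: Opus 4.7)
I would try to find a change of coordinates that turns $f$ into a map on $[0,1)$ with the same Markov/branch structure as the doubling map, and then run a symbolic-dynamics argument. The natural substitution is $h(\theta) = \cot(\pi\theta)$, which is a homeomorphism of $(0,1)$ onto $\RR$. Using the elementary identity $\cot\alpha - \tan\alpha = 2\cot(2\alpha)$, one obtains
\[ f(h(\theta)) = \cot(\pi\theta) - \tan(\pi\theta) = 2\cot(2\pi\theta). \]
Defining $g(\theta) := \tfrac{1}{\pi}\,\mathrm{arccot}\bigl(2\cot(2\pi\theta)\bigr)$ on $[0,1)$ (principal branch), this rewrites as $h \circ g = f \circ h$, so $g$ is topologically conjugate to $f$ via $h$. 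A direct inspection shows that $g$ restricts to a monotone homeomorphism on each of $[0,1/2)$ and $[1/2,1)$ onto $[0,1)$, with $g(0) = 0$ --- precisely the two-full-branch Markov structure of $T(\theta) = 2\theta \bmod 1$. The problem thus reduces to showing that $g$ and $T$ are topologically conjugate.

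For that I would use itineraries. To each $\theta \in [0,1)$ assign $\iota_g(\theta) \in \{0,1\}^{\NN}$ by recording whether $g^n(\theta)$ lies in $[0,1/2)$ or $[1/2,1)$; the same assignment for $T$ just returns the binary expansion. The candidate conjugacy $\psi : [0,1) \to [0,1)$ is then given by $\psi(\eta) = \bigcap_{n \geq 0} I_{b_1\cdots b_n}$, where $\eta = \sum_n b_n 2^{-n}$ and $I_{b_1\cdots b_n}$ is the $n$th-level cylinder of the $g$-coding. By construction $g \circ \psi = \psi \circ T$, and the composition $h \circ \psi$ is the desired conjugacy of $T$ to $f$.

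The main obstacle is to check that $\psi$ is a homeomorphism, equivalently that the cylinders are nonempty and their diameters tend to $0$. Away from $\theta = 0$ this follows from uniform expansion, since a calculation gives $g'(\theta) = 4/(1 + 3\cos^2(2\pi\theta)) > 1$ on $[0,1)\setminus\{0,1/2\}$. Near $\theta = 0$, however, $g$ is \emph{parabolic}: Taylor expansion yields $g(\theta) = \theta + \pi^2 \theta^3 + O(\theta^5)$, in sharp contrast to $T'(0) = 2$. Consequently the cylinders accumulating at $0$ shrink only at the subexponential rate $\sim n^{-1/2}$, and the off-the-shelf symbolic-dynamics theorems for uniformly expanding maps do not apply directly. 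The remedy is to observe that any orbit starting at $\theta > 0$ is strictly increasing until it leaves a neighborhood of $0$; hence for any $\theta \neq 0$ the symbol $\iota_g(\theta)_n$ eventually records ``$1$''. This forces each infinite itinerary to specify a unique point and makes $\psi$ a continuous bijection, delivering the theorem.
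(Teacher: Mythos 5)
The paper does not prove this statement; it is quoted as a citation to Chamberland \& Martelli, so there is no in-paper proof for me to compare yours against. I will instead review your sketch on its own terms.

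Your change of variables is correct: with $h(\theta)=\cot(\pi\theta)$ the identity $\cot\alpha-\tan\alpha=2\cot(2\alpha)$ gives $f(h(\theta))=2\cot(2\pi\theta)$, and the induced circle map $g$ has $g'(\theta)=4/(1+3\cos^2(2\pi\theta))$, two full increasing branches, and the Taylor expansion $g(\theta)=\theta+\pi^2\theta^3+O(\theta^5)$ near the indifferent fixed point. You are also right that the factor $2$ prevents $h$ from being the conjugacy directly and that a symbolic-dynamics argument is needed. So far this is a sound program, and identifying the parabolic fixed point at $\theta=0$ as the real obstruction is the right instinct.

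The gap is in the last paragraph. The observation that every orbit with $\theta>0$ eventually escapes a neighborhood of $0$ shows only that the itinerary $000\cdots$ is realized uniquely (by $\theta=0$). It does not, by itself, show that a general itinerary pins down a unique point: two distinct points $\theta_1<\theta_2$ could share an itinerary containing infinitely many $1$'s, and escaping the cusp says nothing about whether the gap between their orbits must grow. To close this you need a quantitative expansion argument. The standard route: if $J_n=g^n([\theta_1,\theta_2])$ have a common itinerary, then $|J_{n+1}|=\int_{J_n}g'\ge|J_n|$, so $|J_n|\uparrow L$; if $L>0$ one uses that $g'-1=\tfrac{3\sin^2(2\pi\theta)}{1+3\cos^2(2\pi\theta)}$ is bounded below on any subinterval of $[0,1/2)$ or $[1/2,1)$ of length $\ge L/2$, hence $|J_{n+1}|-|J_n|\ge c(L)>0$, contradicting $|J_n|\le1$. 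This also exposes a second issue you glossed over: $g'(1/2)=1$ as well, so ``uniform expansion away from $\theta=0$'' is not literally true, and the bound above must be phrased in terms of intervals of definite length rather than distance to $0$ alone. Finally, you should say a word about continuity of $\psi^{-1}$ (the domains are non-compact, so a continuous bijection is not automatically a homeomorphism), though this is standard once the cylinder diameters are shown to tend to $0$. With these repairs the plan becomes a complete proof.
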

Their result has a number of immediate consequences. Understanding the orbit of any given initial value is roughly as complicated as understanding the distribution of binary digits of a given real number and, as a consequence, Graham's question is probably not going to get answered soon. However, the Theorem also implies that
for almost all initial conditions, the orbit is dense on the real line (and it is, of course, a reasonable guess that $2$ is a `typical' initial value).

\subsection{Heuristic.} One could rephrase Graham's question a little bit: asking whether the sequence is unbounded is really equivalent to asking whether it gets arbitrarily cose to the origin. Unboundedness can only arise by exploiting the singularity in the origin (see Fig. 2 for a cartoon of what Graham's iteration approximately does). So we can ask a very related question.\\

\begin{quote}
\textbf{Question.} For $\varepsilon > 0$ and $x_0 \in \mathbb{R}$, how many iterations of $f(x) = x-1/x$ does it usually take to end up in the interval $[-\varepsilon, \varepsilon]$?\\
\end{quote}

\begin{center}
\begin{figure}[h!]
\begin{tikzpicture}
\draw [thick, <->] (0,0) -- (8,0);
\draw [very thick] (3,-0.1) -- (3,0.1);
\node at (3, -0.3) {$-1$};
\draw [very thick] (4,-0.1) -- (4,0.1);
\node at (4, -0.3) {$0$};
\draw [very thick] (5,-0.1) -- (5,0.1);
\node at (5, -0.3) {$1$};
\draw[thick,->] (7.5, 0) to[out=110, in=0] (7.25, 0.4) to[out=180, in=70] (7,0.1);
\draw[thick,->] (7.25, 0) to[out=110, in=0] (7, 0.4) to[out=180, in=70] (6.75,0.1);
\draw[thick,->] (7, 0) to[out=110, in=0] (6.6, 0.4) to[out=180, in=70] (6.2,0.1);
\draw[thick,->] (6.7, 0) to[out=110, in=0] (6.3, 0.4) to[out=180, in=70] (5.9,0.1);
\draw[thick,->] (6.3, 0) to[out=110, in=0] (5.9, 0.4) to[out=180, in=70] (5.5,0.1);
\draw[thick,->] (6, 0) to[out=110, in=0] (5.4, 0.4) to[out=180, in=70] (4.8,0.1);
\draw[thick,->] (5.75, 0) to[out=110, in=0] (5.1, 0.4) to[out=180, in=70] (4.6,0.1);
\draw[thick,->] (0.5,0) to[out=70, in=180] (0.8, 0.4) to[out=0, in=110] (1.1,0.1);
\draw[thick,->] (0.75,0) to[out=70, in=180] (1.05, 0.4) to[out=0, in=110] (1.4,0.1);
\draw[thick,->] (1,0) to[out=70, in=180] (1.3, 0.4) to[out=0, in=110] (1.6,0.1);
\draw[thick,->] (1.3,0) to[out=70, in=180] (1.7, 0.4) to[out=0, in=110] (2.1,0.1);
\draw[thick,->] (1.6,0) to[out=70, in=180] (2.1, 0.4) to[out=0, in=110] (2.5,0.1);
\draw[thick,->] (2,0) to[out=70, in=180] (2.4, 0.4) to[out=0, in=110] (2.8,0.1);
\draw[thick,->] (2.5,0) to[out=70, in=180] (3, 0.4) to[out=0, in=110] (3.4,0.1);
\draw[thick,->] (2.7,0) to[out=70, in=180] (3.2, 0.4) to[out=0, in=110] (3.6,0.1);
\draw[thick,->] (3.7,0) to[out=60, in=180] (4.7, 0.8) to[out=0, in=130] (5.2,0.5);
\draw[thick,->] (3.8,0) to[out=70, in=180] (4.9, 0.8) to[out=0, in=120] (6,0.5);
\draw[thick,->] (3.9,0) to[out=70, in=180] (6, 1) to[out=0, in=120] (7.5,0.5);
\draw[thick,->] (4.1,0) to[out=110, in=0] (2, 1) to[out=180, in=60] (0,0.5);
\draw[thick,->] (4.2,0) to[out=110, in=0] (2.5, 0.9) to[out=180, in=60] (1,0.5);
\draw[thick,->] (4.3,0) to[out=110, in=0] (3, 0.8) to[out=180, in=60] (2,0.5);
\end{tikzpicture}
\caption{A sketch of iterating $f(x) = x-1/x$. Points bigger than 1 get sent to point in $[0,1]$ which then moves to a point $\leq -1$ which moves to $[-1,0]$ which is sent to a point $\geq 1$ and so on.}
\end{figure}
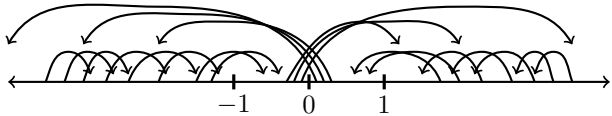
\end{center}

\vspace{-10pt}

To get a feeling for the main result we quickly sketch a heuristic for $f(x) = x - 1/x$. Let $0 < \varepsilon \ll 1$ and suppose we start at distance $|x_0| \sim 1/\varepsilon$ from the origin: how many iterations would one expect does it take until one hits the interval $[-\varepsilon, \varepsilon]$ for the first time? If we are in a point $|x_0| \gg 1$, it takes approximately $\sim x_0^2$ iteration steps to reach the interval $[-1,1]$ for the first time (see Lemma 1 for a precise formulation of this). Then, assuming that everything is fairly random/chaotic, there's a $\sim \varepsilon$ chance of ending in $[-\varepsilon,\varepsilon]$ and a $1-\varepsilon$ chance of missing it. If we end up in a random spot in $[\varepsilon, 1]$, the map transports us, on average, to distance
$$ \int_{\varepsilon}^{1} \frac{dy}{y} \sim \log\left(\frac{1}{\varepsilon}\right) \qquad \mbox{from the origin, requiring another} \qquad  \int_{\varepsilon}^{1} \frac{dy}{y^2} \sim \frac{1}{\varepsilon}$$
steps to get back to $[-1,1]$ for another chance to hit the target interval $[-\varepsilon, \varepsilon]$. This random heuristic suggests that, starting from $|x_0| \sim 1/\varepsilon$ it would take approximately $\varepsilon^{-2}$ steps to first get to the interval $[-1,1]$. Then we would need another $\sim 1/\varepsilon$ attempts to hit the target $[-\varepsilon, \varepsilon]$ and each failed attempts costs, on average, $1/\varepsilon$ additional steps to get back to $[-1,1]$ and try again for a total of $\sim \varepsilon^{-2} \sim |x_0|^2$ steps. Naturally, this heuristic is far from rigorous, everything is deterministic and nothing is random. The main result shows that this scaling is nonetheless accurate.

\subsection{Main Result}
We will show that such recursions are effective at ensuring that iterates get close to one of the poles. 
\begin{thm} Let  $m \in \mathbb{N}$, $\alpha_1, \dots, \alpha_m \in \mathbb{R}_{>0}$ and $\beta_1, \dots, \beta_m \in \mathbb{R}$ and consider
 $$ f(x) = x - \sum_{i=1}^{m} \frac{\alpha_i}{x - \beta_i}.$$
 
There exist $c_1, c_2 > 0$ (depending on $f$) such that the set
 $$X = \left\{ x \in \mathbb{R}: \min_{1 \leq n \leq c_1 |x|^2} ~~ \min_{1 \leq i \leq m} ~~ |f^{(n)}(x) - \beta_i|  \leq \frac{1}{|x|} \right\}$$
 has positive density in the sense of $|X \cap [-y,y]| \geq c_2 y$ for all sufficiently large $y$.
\end{thm}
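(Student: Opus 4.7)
The plan rests on two ingredients: an algebraic identity that makes Lebesgue measure invariant under $f$, and the quantitative descent estimate (Lemma~1) for iterates at large magnitude. For the invariance, given $y\in\RR$ write $f(x)-y=P(x)/Q(x)$ where $Q(x)=\prod_{i=1}^m(x-\beta_i)$ and $P$ is monic of degree $m+1$; the preimages of $y$ are the roots $x_0,\dots,x_m$ of $P$ and $f'(x_k)=P'(x_k)/Q(x_k)$. Hence
\[
\sum_{k=0}^m \frac{1}{f'(x_k)} \;=\; \sum_{k=0}^m \operatorname{Res}_{z=x_k}\frac{Q(z)}{P(z)} \;=\; 1,
\]
the last equality forced by $\deg P-\deg Q=1$. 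Since $f'>0$ everywhere, this gives $|f^{-1}(A)|=|A|$ for every measurable $A$; equivalently, the transfer operator $L$ fixes the constant density $1$ and sends any $\rho\le 1$ to $L\rho\le 1$.

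Fix a large $R$ and set $\mu_n=(f^n)_{\ast}(\mathbf{1}_{[R,2R]}\,dx)$. Each measure has mass $R$ and, by the monotonicity above, density $\rho_n\le 1$, so $\mu_n([-M,M])\le 2M$ uniformly in $n$ for any fixed bounded interval. Put $N=c_1R^2$ and $B_R=\bigcup_i[\beta_i-1/R,\beta_i+1/R]$, so $|B_R|=2m/R$. A consequence of Lemma~1 is that an orbit that enters $B_R$ is kicked to magnitude $\gtrsim R$ and needs $\gtrsim R^2$ further iterations to return to the bounded region; choosing $c_1$ small forces each $x\in[R,2R]$ to visit $B_R$ at most once during $[1,N]$. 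The sets $\{x\in[R,2R]:f^{(n)}(x)\in B_R\}$ are then pairwise disjoint in $n$ and
\[
|X\cap[R,2R]| \;=\; \sum_{n=1}^N \mu_n(B_R).
\]
Invariance immediately yields the upper bound $\sum_n\mu_n(B_R)\le 2mc_1R$, so the theorem reduces to a matching lower bound $\sum_n\mu_n(B_R)\gtrsim R$.

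The lower bound is the main obstacle, and the plan is an excursion analysis. The density bound $\rho_n\le 1$ forces the mass-time at $|x|>M$ during $[0,N]$ to be at least $(R-2M)(N+1)\sim R^3$, and by Lemma~1 an excursion of maximum magnitude $r$ contributes $\sim r^2/A$ to that mass-time. Near each pole $\beta_i$ the local behavior $f(x)\sim -\alpha_i/(x-\beta_i)$ identifies a close approach at distance $\delta$ with an excursion of magnitude $\sim\alpha_i/\delta$; combined with the Lebesgue invariance on the bounded region, this gives the distribution of excursion magnitudes an effective heavy tail $\sim r^{-2}dr$. A pigeonhole / extreme-value argument --- there will be $\sim R$ excursions per orbit on average, and with an $r^{-2}$ tail the maximum among $K$ of them is itself of order $K$ --- then produces, for a positive fraction of $x\in[R,2R]$, at least one excursion to magnitude $\gtrsim R$, i.e., at least one iterate within $\lesssim 1/R$ of a pole. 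Making this heuristic heavy-tail extremal analysis rigorous for the deterministic, chaotic dynamics (where the chaos established for $m=1$ by Chamberland--Martelli is used to control the distribution of close-approach distances) is the technical heart of the proof.
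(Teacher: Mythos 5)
Your ingredients are the right ones --- Lebesgue measure invariance (your residue argument is a fine direct proof of Glasser's identity), the slow-descent estimate (Lemma~1), and the disjointness of close-approach times --- but you assemble them in a direction that creates a spurious ``technical heart.'' The gap is precisely the lower bound $\sum_n \mu_n(B_R)\gtrsim R$, which you correctly flag as the obstacle, and the heavy-tail / extreme-value program you sketch to attack it (and the appeal to the $m=1$ Chamberland--Martelli conjugacy) is not a proof; it is a heuristic.

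The wrong turn is restricting to the push-forward of $\mathbf{1}_{[R,2R]}\,dx$. The paper instead works with the \emph{full} preimages
$$I_k = \bigl(f^{(k)}\bigr)^{-1}(B_\varepsilon) = \bigl\{x\in\RR : \min_i |f^{(k)}(x)-\beta_i|\le \varepsilon\bigr\},$$
without intersecting with any starting window. Then measure invariance is not merely a density bound $\rho_n\le 1$; iterating $|f^{-1}(A)|=|A|$ gives the \emph{exact} value $|I_k| = |B_\varepsilon| = 2m\varepsilon$ for every $k$, so there is no lower-bound problem at all --- the mass is handed to you for free. The disjointness lemma (which you also have, via Lemma~1) then gives $\bigl|\bigcup_{k\le c/\varepsilon^2} I_k\bigr| = \frac{c}{\varepsilon^2}\cdot 2m\varepsilon \asymp \varepsilon^{-1}$, and Lemma~1 is used a second time, in the other direction, to show that this union is \emph{contained} in $[-1/\varepsilon, 1/\varepsilon]$ (a point with $|x|\ge 1/\varepsilon$ cannot descend to $B_\varepsilon$ in $\le c/\varepsilon^2$ steps). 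A set of measure $\gtrsim 1/\varepsilon$ inside an interval of length $2/\varepsilon$ is the positive-density conclusion. In short: you chose to fix the domain and estimate how much of it hits the target, which forces you to control the distribution of excursion heights; the paper fixes the target, lets the preimage live wherever it wants, uses invariance to compute its size exactly, and then uses the descent lemma only to \emph{confine} it. Same three tools, opposite orientation, and the second orientation makes the lower bound trivial.
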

One reason why the result is interesting is as follows: starting in $x \gg 1$, it takes at least $\sim c \cdot  x^2$ iterations to end up at distance 1 from one of the poles for the first time. The chance of ending up distance $\varepsilon$ the first time one gets within distance $\leq 1$ to the poles is small: this means that if one misses the interval, one nonetheless has a decent chance of ending up at distance $\varepsilon$ within another $\sim x^2$ steps; this is, in a sense, indicative of very structured mixing close to the origin.\\
The result is optimal up to constants in different ways. At least $\sim x^2$ iterations are required for the iterates to even get close to the origin; in particular, if $c_1$ is chosen too small (depending on $f$), the set $X$ would be empty. Approximating at rate $1/|x|$ is also optimal if we want the set to have positive density. The proof is not too involved and uses two facts that nicely play together: the first is that the dynamical system is measure-preserving in the sense that $|f^{-1}(A)| = |A|$. This is equivalent to a celebrated identity in the study of closed-form solutions of integrals, Glasser's theorem \cite{glasser}. In the case of Graham's recursion $f(x) = x - 1/x$, this is implied by the \textit{Cauchy-Schl\"omilch identity} \cite{cauchy, liou} for absolutely integrable $g$
$$ \int_{\mathbb{R}} g(x) dx = \int_{\mathbb{R}} g(x-1/x) dx.$$
 The second ingredient comes from the fact that points close to one of the poles gets transported very far away in the next iteration and remain far away for a long time. This leads to a disjointness of pre-images that can be exploited.\\
  It stands to reason that the confluence of these two rare factors suggests that Graham's recursion and, more generally, dynamical systems of this type might have other interesting properties. It would be interesting if more refined consequences of the heuristic could be made precise: for example, the heuristic suggests that almost all $x \in \mathbb{R}$ should end up within distance $1/|x|$ of one of the poles within, say, $x \cdot (\log{x})^2$ iterations. It might also be interesting to understand whether higher dimensional analogues exist; a generalization of the Glasser Theorem due to Aomoto \& Forrester \cite{ao} could be relevant in this regard.

\section{Proof}
We assume that $m, \alpha_1, \dots, \alpha_m > 0$ and $\beta_1, \dots, \beta_m$ are fixed and that
 $ f(x) = x - \sum_{i=1}^{m} \alpha_i/(x - \beta_i).$
 It is easy to see that $\alpha_i > 0$ is necessary, consider the example $f(x) = x + 1/x$.
We assume, without loss of generality, that the $\beta_i$ are all distinct. We also assume (wlog) that the ordering $\beta_1 < \beta_2 < \dots < \beta_m$. Unspecified constants will depend on the function $f$ in ways that could be made explicit.

\subsection{Slow Movement Lemma}
The first step consists in showing that if one starts far away from the poles, say distance $\sim x$ to the nearest pole, then it takes many ($\sim x^2$) steps to approach the poles.
\begin{lemma}
There exist constants $c_1, c_2 > 0$ (depending only on $f$) such that if 
$$ \min_{1 \leq i \leq m} |x- \beta_i| \geq c_1,$$
then
$$ \min_{1 \leq j \leq c_2 x^2} ~~ \min_{1 \leq i \leq m} |f^{(j)}(x) - \beta_i| \geq \frac{1}{2}  \min_{1 \leq i \leq m} |x- \beta_i|.$$
\end{lemma}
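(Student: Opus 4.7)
The plan is to combine a uniform step-size estimate with a short bootstrap. I would set $A = \sum_{i=1}^m \alpha_i$, $B = \max_i |\beta_i|$, and $D := \min_i |x - \beta_i|$ for the initial distance to the pole set. The first observation is that whenever a point $y$ satisfies $\min_i |y - \beta_i| \geq D/2$, the increment is controlled:
$$ |f(y) - y| = \left| \sum_{i=1}^m \frac{\alpha_i}{y - \beta_i} \right| \leq \frac{2A}{D}, $$
since each term is bounded in absolute value by $2 \alpha_i / D$.

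Next I would argue by induction on $k$ that if $d_j := \min_i |f^{(j)}(x) - \beta_i| \geq D/2$ holds for all $j \leq k$, then the step-size bound combined with the triangle inequality yields
$$ |f^{(k+1)}(x) - x| \leq \sum_{j=0}^{k} |f^{(j+1)}(x) - f^{(j)}(x)| \leq \frac{2A(k+1)}{D}. $$
Consequently $|f^{(k+1)}(x) - \beta_i| \geq D - 2A(k+1)/D$, which is still $\geq D/2$ as long as $k + 1 \leq D^2/(4A)$. This closes the induction and guarantees $d_j \geq D/2$ for every $j \leq D^2/(4A)$, with no further dynamical input needed.

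The last step is bookkeeping: I would convert $D^2/(4A)$ into a multiple of $x^2$ by splitting into two regimes. When $|x| \geq 2B$ the triangle inequality gives $D \geq |x|/2$, hence $D^2/(4A) \geq x^2/(16A)$. When $|x| < 2B$ the target $c_2 x^2$ is bounded above by $4 c_2 B^2$, while the hypothesis $D \geq c_1$ forces $D^2/(4A) \geq c_1^2/(4A)$, so any $c_2 \leq c_1^2/(16AB^2)$ suffices. Taking $c_2$ to be the smaller of the two resulting thresholds (and $c_1>0$ arbitrary) completes the argument. There is no genuine obstacle here: the lemma is driven entirely by the single fact that the step size of $f$ is comparable to the reciprocal of the current distance to the poles, and the orbit therefore cannot drift by more than a fraction of $D$ in fewer than $\sim D^2$ iterations.
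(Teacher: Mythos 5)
Your argument is correct, and it is genuinely different from the paper's. The paper forces $c_1 \geq 2(\beta_m - \beta_1)$ so that $x$ lies to one side of all the poles, observes that (since every $\alpha_i > 0$) the orbit is then monotone toward the pole region, and bounds the step size of the single-pole comparison map $g(x) = x - (\sum_i \alpha_i)/(x - \beta_m)$, which dominates $f$ from below. You instead run a self-improving displacement estimate: as long as the orbit has stayed at distance $\geq D/2$ from every pole, each step has size $\leq 2A/D$, so after $k+1$ steps the orbit has moved at most $2A(k+1)/D$ from $x$, which keeps it at distance $\geq D/2$ from the poles as long as $k+1 \leq D^2/(4A)$. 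This bootstrap avoids monotonicity entirely (it would work even with sign changes in the $\alpha_i$, and even if $x$ sits between poles), and it shows the lemma holds for \emph{any} $c_1 > 0$, not just $c_1$ large. What the paper's monotone/comparison route buys is a very transparent picture of what the orbit actually does far from the poles, which matches the heuristic in the introduction; what your route buys is brevity, more generality, and no case split on which side of the poles you start. One small remark on the bookkeeping: when $B = 0$ (single pole at the origin) the case $|x| < 2B$ is vacuous, so the quantity $c_1^2/(16AB^2)$ is never invoked; otherwise the two-regime reduction from $D^2/(4A)$ to $c_2 x^2$ is clean and correct.
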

\begin{proof}
By choosing $c_1 \geq 2 (\beta_m - \beta_1)$, we can ensure that $x$ is not `between' the poles but either smaller than the smallest pole or larger than the largest pole. We will only go through with the case $x > \beta_m$, the case $x < \beta_1$ is analogous. 
If $x > \beta_m$, then $x - \beta_i > 0$ and, since $\alpha_i > 0$, we deduce that
$$  f(x) = x - \sum_{i=1}^{m} \frac{\alpha_i}{x - \beta_i} < x,$$
the iterates are moving closer to the poles.
Introducing
$$ g(x) = x - \frac{\sum_{i=1}^{m} \alpha_i}{x - \beta_m},$$
it is easy to see that 
$$ x > \beta_m \implies g(x) < f(x).$$
It thus suffices to study iterations of $g$ and show that these cannot move too quickly to the poles. Using translation invariance, we may assume that $x_0 = x$ and $\beta_m = 0$. It therefore suffices to study the recursion 
$$ x_{n+1} = x_n -  \frac{\sum_{i=1}^{m} \alpha_i}{x_n}, \qquad x_0 = x > 0.$$
We note that, as long as $x_n \geq x/2$, we have
$$ x_{n+1} \geq x_n - \frac{2 \sum_{i=1}^{m} \alpha_i}{x}.$$
This means it is going to take at least 
$$ \frac{x/2}{\frac{2 \sum_{i=1}^{m} \alpha_i}{x}} =  \left(4 \sum_{i=1}^{m} \alpha_i \right)^{-1} x^2 $$
steps for iterations of $g$ to move from $x$ to a number smaller than $x/2$ and this is the desired statement.
\end{proof}

\subsection{Disjointness Lemma}
The second step in the argument is a disjointness lemma. We introduce, for $\varepsilon > 0$, the set of all real numbers
with the property that the $k-$th iterate is $\varepsilon-$close to one of the poles
$$ I_k = \left\{x \in \mathbb{R}: \min_{1 \leq i \leq m} | f^{(k)}(x)- \beta_i| \leq \varepsilon \right\}.$$
$I_k$ depend on both $k$ and $\varepsilon$, we suppress $\varepsilon$ in the notation.
Note that Lemma 1 already implies that these sets $I_k$ have compact support. One could deduce that, for some constant $c > 0$ depending only on $f$ that the set is contained in $I_k \subseteq [-c \sqrt{k}, c\sqrt{k}]$. The next Lemma shows that
the two sets $I_k$ and $I_{\ell}$ are disjoint, $I_k \cap I_{\ell} = \emptyset$, when $k$ and $\ell$ are too close to each other.
\begin{lemma}
There exists a constant $c > 0$ and $\varepsilon_0 > 0$ (both depending only on $f$) such that for all $\varepsilon \in (0, \varepsilon_0)$,
$$\forall~ |k - \ell| \leq c \cdot \varepsilon^{-2} \quad \qquad I_{k} \cap I_{\ell} = \emptyset.$$
\end{lemma}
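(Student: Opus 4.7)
The plan is to argue by contradiction. Suppose $x \in I_k \cap I_\ell$ with $k < \ell$ (the case $k = \ell$ is vacuous), and set $y := f^{(k)}(x)$, so that $y$ is $\varepsilon$-close to some pole $\beta_i$. The key claim I aim to prove is that a single application of $f$ pushes $y$ to a point $z := f(y)$ which is at distance $\gtrsim 1/\varepsilon$ from every pole. Once this is shown, Lemma~1 immediately forces the next $\sim \varepsilon^{-2}$ iterates of $z$ to remain at distance $\gtrsim 1/\varepsilon$ from every pole, which is in particular much larger than $\varepsilon$. Since $x \in I_\ell$ requires $f^{(\ell-k-1)}(z) = f^{(\ell)}(x)$ to return to within $\varepsilon$ of a pole, this return cannot occur unless $\ell - k - 1 \gtrsim \varepsilon^{-2}$.

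To establish the key claim, I write $y = \beta_i + \delta$ with $|\delta| \leq \varepsilon$ and split off the singular term:
$$ f(y) = \beta_i + \delta - \frac{\alpha_i}{\delta} - \sum_{j \neq i} \frac{\alpha_j}{\beta_i + \delta - \beta_j}.$$
Take $\varepsilon_0$ smaller than half the minimum pole gap $\min_{j \neq i}|\beta_j - \beta_i|$, so that every denominator appearing in the sum over $j \neq i$ has absolute value bounded below by a constant depending only on $f$. The sum over $j \neq i$ is then uniformly bounded by some $C = C(f)$, while the dominant term satisfies $|\alpha_i/\delta| \geq \alpha_i/\varepsilon \geq (\min_i \alpha_i)/\varepsilon$. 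For $\varepsilon$ sufficiently small this gives $|z| \geq (\min_i \alpha_i)/(2\varepsilon)$, and since the poles all lie in the bounded interval $[\beta_1,\beta_m]$, this translates into $\min_j |z - \beta_j| \geq c/\varepsilon$ for some $c = c(f) > 0$.

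I then apply Lemma~1 with $z$ as the initial point. Shrinking $\varepsilon_0$ further if necessary so that $c/\varepsilon_0 \geq c_1$, the hypothesis of Lemma~1 is satisfied, so for every $1 \leq j \leq c_2 z^2$ we have $\min_i |f^{(j)}(z) - \beta_i| \geq (c/2)/\varepsilon$, which exceeds $\varepsilon$ once $\varepsilon < \sqrt{c/2}$. Since $z^2 \geq c^2/\varepsilon^2$, this forbids $x \in I_\ell$ unless $\ell - k - 1 > c_2 c^2 \varepsilon^{-2}$, which yields the lemma with some constant $c = c(f) > 0$.

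I do not anticipate a serious obstacle: the only mildly delicate point is the lower bound on $|z|$, where one must verify that the non-singular contributions $\sum_{j \neq i}\alpha_j/(\beta_i + \delta - \beta_j)$ cannot conspire to cancel the dominant singular term $\alpha_i/\delta$. This is guaranteed by the fact that the poles are distinct, so choosing $\varepsilon_0$ small relative to the pole gaps absorbs all such geometry into the constant. Everything else is a clean invocation of Lemma~1.
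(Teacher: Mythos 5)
Your proposal is correct and follows essentially the same route as the paper: expand $f$ near a pole $\beta_i$, observe that the singular term $\alpha_i/\delta$ dominates once $\varepsilon$ is small relative to the pole gaps, conclude $|f^{(k+1)}(x)| \gtrsim 1/\varepsilon$, and then invoke Lemma~1 to keep subsequent iterates away from all poles for $\sim \varepsilon^{-2}$ steps. You are slightly more explicit than the paper about converting the bound on $|z|$ into a bound on $\min_j |z - \beta_j|$ (using that the poles lie in a bounded interval), which the paper leaves implicit; otherwise the arguments are the same.
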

\begin{proof} The idea is as follows: if a certain iterate $f^{(k)}(x)$ ends up being close to one of the poles, we would expect $f^{(k+1)}(x)$ be pretty far away from any of the poles. At that point, we can apply Lemma 1 and conclude that for many of the subsequent iterations, that point remains far away from the poles. This can be made precise as follows: for any $\delta>0$ sufficiently small, we see that
\begin{align*}
 f(\beta_j + \delta) &= \beta_j + \delta - \sum_{i=1}^{m} \frac{ \alpha_i}{\beta_j + \delta - \beta_i} \\
 &= - \frac{\alpha_j}{\delta} + \beta_j + \delta -   \sum_{i=1 \atop i \neq j}^{m} \frac{ \alpha_i}{\beta_j + \delta - \beta_i}.
\end{align*}
The first term has a singularity, the remaining terms are all bounded (in a sufficiently small neighborhood of $\beta_j$) because the poles are all distinct. This means that, in a sufficiently small neighborhood $\delta_j$ of $\beta_j$, we have that
$$|f^{(k)}(x)  - \beta_j| \leq \delta_j \implies |f^{(k+1)}(x)| \geq \frac{\alpha_j}{2} \frac{1}{|f^{(k)}(x)  - \beta_j|} \geq \frac{\alpha_j}{2} \frac{1}{\delta_j}.$$
Taking now $\varepsilon_0 = \min \left\{ \delta_1, \dots, \delta_m \right\},$ we deduce that, for any $\varepsilon \in (0, \varepsilon_0)$, we have
$$ \min_{1 \leq i \leq m} | f^{(k)}(x)- \beta_i| \leq \varepsilon  \implies |f^{(k+1)}(x)| \geq \left( \min_{1 \leq i \leq m} \frac{\alpha_i}{2} \right) \frac{1}{\varepsilon}.$$
In other words, $x \in I_k$ implies that $|f^{(k+1)}(x)| \geq c^*/\varepsilon$. At this point, Lemma 1 can be used to conclude that 
$$ \forall~1 \leq t \leq \frac{c}{\varepsilon^2} \qquad |f^{(k+t)}(x)| \geq \frac{c^*}{2 \varepsilon} \gg 1.$$
This then implies that $I_k \cap I_{\ell} = \emptyset$ as long as $|k - \ell| \leq c \varepsilon^{-2}$. 
\end{proof}

\subsection{Glasser's Theorem}
We invoke an amazing result of Glasser \cite{glasser}.
\begin{thm}[Glasser's Master Theorem] Suppose
$$f(x) = x - \sum_{i=1}^{m} \frac{\alpha_i}{x - \beta_i},$$
with $\alpha_i > 0$ and $\beta_i \in \mathbb{R}$. Then, for any absolutely integrable $h:\mathbb{R} \rightarrow \mathbb{R}$,
$$ \emph{p.v.} \int_{\mathbb{R}} h(x) dx =  \emph{p.v.} \int_{\mathbb{R}} h(f(x)) dx.$$
\end{thm}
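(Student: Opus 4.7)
My strategy is the standard change-of-variables one: cut $\mathbb{R}$ into the $m+1$ open intervals on which $f$ is a smooth bijection onto $\mathbb{R}$, apply the one-dimensional change of variables on each, and sum the results using a Jacobian identity. The main work is algebraic; the analytic side is routine once one commits to a principal-value interpretation.

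First I would record the structure of $f$. Because $\alpha_i > 0$, the derivative $f'(x) = 1 + \sum_{i=1}^m \alpha_i/(x-\beta_i)^2$ is strictly positive, so $f$ is strictly increasing on each interval $I_j = (\beta_{j-1}, \beta_j)$, where $\beta_0 := -\infty$ and $\beta_{m+1} := +\infty$. A direct inspection of the boundary limits ($f(x) \to +\infty$ as $x \to \beta_i^-$, $f(x) \to -\infty$ as $x \to \beta_i^+$, together with the obvious behaviour at $\pm\infty$) shows that $f \colon I_j \to \mathbb{R}$ is a bijection. Hence for every $y \in \mathbb{R}$ outside a finite set of critical values, the equation $f(x) = y$ has exactly $m+1$ solutions $x_1(y), \dots, x_{m+1}(y)$, one in each interval.

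The heart of the argument is the Jacobian identity
$$ \sum_{k=1}^{m+1} \frac{1}{f'(x_k(y))} \;=\; 1.$$
To prove it, I would clear denominators in $f(x) = y$ to produce the monic polynomial
$$ P(x) \;:=\; (f(x) - y) \prod_{i=1}^m (x - \beta_i)$$
of degree $m+1$, whose roots are precisely the $x_k(y)$. Differentiating and using $f(x_k) = y$ yields $P'(x_k) = f'(x_k) \prod_i (x_k - \beta_i)$. The rational function $\prod_i (x-\beta_i)/P(x)$ is monic with numerator of degree $m$ and denominator of degree $m+1$, so partial fractions give
$$ \frac{\prod_i (x-\beta_i)}{P(x)} \;=\; \sum_{k=1}^{m+1} \frac{1}{f'(x_k)} \cdot \frac{1}{x-x_k}.$$
Multiplying by $x$ and letting $x \to \infty$, the left-hand side tends to $1$, which yields the identity.

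Putting it together, the ordinary change of variables $y = f(x)$ on each $I_j$ gives
$$ \int_{I_j} h(f(x))\, dx \;=\; \int_{\mathbb{R}} \frac{h(y)}{f'(x_j(y))}\, dy,$$
and summing over $j$ combined with the Jacobian identity recovers $\int h(y)\, dy$. The main obstacle I anticipate is analytic bookkeeping near the poles: since $f$ sweeps through all of $\mathbb{R}$ in any neighborhood of $\beta_i$, the composition $h \circ f$ need not be absolutely integrable there even when $h$ is. This is precisely why the identity is stated with principal values, and one has to check that the divergent contributions from the two sides of each pole cancel upon summation. That cancellation is exactly what the $\sum 1/f'(x_k) = 1$ identity encodes at the Jacobian level, and realizing both sides as the symmetric limit $\lim_{\eta \to 0^+} \int_{\min_i |x-\beta_i| > \eta}$ makes the cancellation explicit and finishes the argument.
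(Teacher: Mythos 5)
The paper does not prove this theorem at all: it is quoted as a black-box result attributed to Glasser (1983) with the Cauchy--Schl\"omilch case ($m=1$) traced back to Cauchy and Schl\"omilch. So there is no ``paper proof'' to compare against, and supplying an argument from scratch is a reasonable thing to do.

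Your argument is essentially the standard (and correct) proof of Glasser's theorem: split $\mathbb{R}$ into the $m+1$ intervals $(\beta_{j-1},\beta_j)$ on each of which $f$ is an increasing bijection onto $\mathbb{R}$, change variables on each piece, and sum, reducing everything to the Jacobian identity $\sum_{k=1}^{m+1} 1/f'(x_k(y)) = 1$. Your derivation of that identity --- clearing denominators to form the monic polynomial $P(x)=(f(x)-y)\prod_i(x-\beta_i)$, noting $P'(x_k)=f'(x_k)\prod_i(x_k-\beta_i)$, expanding $\prod_i(x-\beta_i)/P(x)$ in partial fractions, and reading off the leading coefficient --- is correct, and I verified it reduces to the familiar identity in the case $f(x)=x-1/x$.

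One place where you are working harder than necessary, and where the worry you flag is actually not present: you write that $h\circ f$ ``need not be absolutely integrable near the poles,'' and anticipate having to track cancellations between the two sides of each $\beta_i$ via the principal value. In fact, for this class of $f$ the opposite is true. Since each $\alpha_i>0$,
$$ f'(x) = 1 + \sum_{i=1}^m \frac{\alpha_i}{(x-\beta_i)^2} > 1 \quad \text{for all } x,$$
so $1/f'(x_j(y)) < 1$ uniformly, and the very change of variables you perform gives
$$ \int_{I_j} |h(f(x))|\,dx = \int_{\mathbb{R}} \frac{|h(y)|}{f'(x_j(y))}\,dy \le \int_{\mathbb{R}} |h(y)|\,dy < \infty.$$
Summing the $m+1$ pieces shows $h\circ f \in L^1(\mathbb{R})$ outright, with $\|h\circ f\|_{L^1} \le (m+1)\|h\|_{L^1}$. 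So there is no divergent contribution near the poles and no cancellation to arrange; the principal-value symbol in the statement is, for this family of $f$, harmless but superfluous (it is inherited from Glasser's more general setting). You should either drop that paragraph or replace it with the observation above, which simultaneously closes the integrability question and justifies interchanging the sum over $j$ with the integral over $y$.
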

A particular special case, the identity
$$ \int_{\mathbb{R}} h(x) dx = \int_{\mathbb{R}} h(x - 1/x) dx,$$
was already known to Cauchy \cite{cauchy} in 1823. The same idea also appears in a letter by Schl\"omilch to Liouville \cite{liou} where Schl\"omilch describes that it is also given in his 1848 book \textit{Analytische Studien} as well as in a table of integrals by  Bierens de Haan (\textit{Vous trouverez aussi la formule dans la collection des int\'egrales d\'efinies de M. Bierens de Han} [sic!] \textit{\'a Amsterdam.}). Glasser's theorem is a valuable tool in the evaluation of definite integrals \cite{am}. This magic
identity is also related to another surprising fact, the Boole-Stein-Weiss phenomenon for the Hilbert transform \cite{boole, stein}.
To the best of our knowledge, this is the first time Glasser's magic identity is used in the context of dynamical systems.
\begin{lemma}
There exists $\varepsilon_0 > 0$ (depending only on $f$) such that for all $\varepsilon \in (0, \varepsilon_0)$, 
$$ | I_k| =  \left| \left\{x \in \mathbb{R}: \min_{1 \leq i \leq m} | f^{(k)}(x)- \beta_i| \leq \varepsilon \right\} \right| = m \varepsilon.$$
\end{lemma}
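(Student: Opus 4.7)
My plan is to express $|I_k|$ as an integral of an indicator composed with $f^{(k)}$ and then to peel off one application of $f$ at a time using Glasser's master theorem, reducing the problem to the elementary computation of the measure of a union of small intervals around the poles.

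Set $A_\varepsilon = \bigcup_{i=1}^{m} [\beta_i - \varepsilon, \beta_i + \varepsilon]$. Then $I_k = (f^{(k)})^{-1}(A_\varepsilon)$ and
$$|I_k| \;=\; \int_{\mathbb{R}} \mathbf{1}_{A_\varepsilon}(f^{(k)}(x))\, dx.$$
For $\varepsilon < \varepsilon_0 := \tfrac{1}{2}\min_{i \neq j} |\beta_i - \beta_j|$ the intervals in $A_\varepsilon$ are pairwise disjoint, so $|A_\varepsilon|$ is simply the sum of their lengths and depends linearly on $\varepsilon$.

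Let $h_j := \mathbf{1}_{A_\varepsilon} \circ f^{(j)}$, i.e.\ the indicator of $(f^{(j)})^{-1}(A_\varepsilon)$. I apply Glasser's identity with test function $h_j$ to obtain
$$\int_{\mathbb{R}} h_j(x)\, dx \;=\; \int_{\mathbb{R}} h_j(f(x))\, dx \;=\; \int_{\mathbb{R}} h_{j+1}(x)\, dx.$$
Iterating this from $j = 0$ up to $j = k-1$ telescopes to $|I_k| = \int h_k\, dx = \int h_0\, dx = |A_\varepsilon|$, and evaluating the measure of a disjoint union of intervals yields the stated linear dependence on $\varepsilon$.

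The only point that requires care is verifying the hypotheses of Glasser's theorem at each step: absolute integrability of the test function, and the absence of pathology when composing with $f$ near the poles $\beta_i$. I handle this inductively. The base case $h_0$ is a bounded indicator on a compact set. For the inductive step, because $f(x) \to \pm\infty$ as $x$ approaches any pole, the preimage under $f$ of any fixed compact set is bounded and stays uniformly away from every $\beta_i$; hence $h_{j+1} = h_j \circ f$ is again bounded, supported on a compact set that avoids a neighborhood of each pole, and Glasser's identity applied at stage $j$ gives $\int h_{j+1} = \int h_j < \infty$. This keeps the hypotheses of the master theorem verified at every stage, and the iteration then runs cleanly; this bookkeeping around the poles is the only nontrivial part of the argument.
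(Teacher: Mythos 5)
Your proposal is correct and follows essentially the same route as the paper: express $|I_k|$ as $\int \mathbf{1}_{A_\varepsilon}\circ f^{(k)}$, apply Glasser's identity once to reduce $k+1$ to $k$, and induct, with the only substantive obligation being to check absolute integrability of each $\mathbf{1}_{A_\varepsilon}\circ f^{(j)}$. The one place you diverge is in how you certify compact support: you argue directly that since $|f(x)|\to\infty$ both as $|x|\to\infty$ and as $x\to\beta_i$, the preimage of any compact set under $f$ is compact (bounded and bounded away from the poles), whereas the paper invokes its Lemma 1 to conclude $I_k\subseteq[-c\sqrt{k},c\sqrt{k}]$. Your version is more self-contained and does not depend on the slow-movement estimate; the paper's reuses machinery it has already built. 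Both are sound.
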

\begin{proof} The argument is clear when $k = 0$ once $\varepsilon$ is sufficiently small (smaller than half the minimal distance between any two distinct poles). We now proceed via induction $k \rightarrow k+1$. We consider the function
$$ h(x) = \begin{cases}
1 \qquad &\mbox{if}~ \min_{1 \leq i \leq m} |x- \beta_i| \leq \varepsilon \\
0 \qquad &\mbox{otherwise.} 
\end{cases}$$
Then $h$ is absolutely integrable (being bounded and compactly supported) and, as we have just seen, for $\varepsilon$ sufficiently small,
$$ \int_{\mathbb{R}} h(x) dx = m \varepsilon.$$
We have
$$ |I_k| = \int_{\mathbb{R}} \chi_{I_k}(x) dx = \int_{\mathbb{R}} h(g^{(k)}(x)) dx.$$
It remains to argue that $h(g^{(k)}(x))$ is absolutely integrable: we note that it is bounded by 1 and, as a consequence of Lemma 1, we have $I_k \subseteq [-c \sqrt{k}, c\sqrt{k}]$ implying that $h \circ g^{(k)}$
 has compact support. Glasser's Theorem applies and 
$$ \int_{\mathbb{R}} h(g^{(k)}(x)) dx = \int_{\mathbb{R}} h(g^{(k)}(g(x))) dx=  \int_{\mathbb{R}} h(g^{(k+1)}(x)) dx = |I_{k+1}|.$$
\end{proof}

\subsection{Conclusion.} With these ingredients in place, we can now finish the argument.
Lemma 2 guarantees that, once $\varepsilon \in (0, \varepsilon_0)$, that
$$ J= \bigcup_{k=1}^{c_2/\varepsilon^{2}} I_k \qquad \mbox{is a union of disjoint sets.}$$
Therefore
$$ |J| = \left| \bigcup_{k=1}^{c_2/\varepsilon^{2}} I_k \right| = \sum_{k=1}^{c_2/ \varepsilon^{2}} |I_k|.$$
At this point, we invoke Lemma 3 and deduce
$$ |J| = \sum_{k=1}^{c_2/ \varepsilon^{2}} |I_k| = \frac{c_2}{\varepsilon^2} m \varepsilon = \frac{c_2 m}{\varepsilon}.$$
The last ingredient consists in applying Lemma 1 once more to deduce that
$$  \bigcup_{k=1}^{c_2/\varepsilon^{2}} I_k \subseteq \left[- \frac{1}{\varepsilon}, \frac{1}\varepsilon\right].$$
This can be seen as follows: Lemma 1 states that if
$$ \min_{1 \leq i \leq m} |x- \beta_i| \geq c_1,$$
then
$$ \min_{1 \leq j \leq c_2 x^2}  \min_{1 \leq i \leq m} |f^{(j)}(x) - \beta_i| \geq \frac{1}{2}  \min_{1 \leq i \leq m} |x- \beta_i|.$$
Setting $c_2 x^2 = c_2/\varepsilon^2$, we see that any initial value $|x| \geq \varepsilon^{-1}$ cannot iterate to a point close to a pole within $c_2/\varepsilon^2$ iteration steps. Setting now $x = 1/\varepsilon$, we see that there are $c_1, c_2, N_0 > 0$ (depending on $f$) so that, for all $N \geq N_0 $
$$ \left| \left\{ x \in [-N, N]:  \min_{1 \leq n \leq c_1 N^2}  \min_{1 \leq i \leq m} |f^{(n)}(x) - \beta_i|  \leq \frac{1}{N} \right\} \right| \geq c_2 N.$$
By not considering the set of points at distance $c_2 N/100 $ from the origin, we deduce
$$ \left| \left\{ \frac{c_2}{100}N \leq |x| \leq N:  \min_{1 \leq n \leq c_1 N^2}  \min_{1 \leq i \leq m} |f^{(n)}(x) - \beta_i|  \leq \frac{1}{N} \right\} \right| \geq \frac{c_2}{2} N.$$
However, for all points in that set, we have that
$$  c_1 N^2 = c_1 \left(\frac{100}{c_2} \frac{c_2}{100}N \right)^2 \leq \frac{10000 c_1^2}{c_2^2} |x|^2$$
which proves the result.

\end{document}